\documentclass[11pt]{amsart}
\usepackage{amsmath,amssymb,amsbsy,amsfonts,amsthm,latexsym,
                       amsopn,amstext,amsxtra,euscript,amscd}

\usepackage{array}
\usepackage{ifthen}
\usepackage{url}

\textwidth=13.5cm \textheight=21cm \hoffset=-1cm

\newtheorem{thm}{Theorem}

\newtheorem{lemma}{Lemma}

\newtheorem*{conjecture}{Conjecture}

\theoremstyle{definition}

\newcommand{\klammern}[4][]%
{\ifthenelse{\equal{#1}{}}{\left#2}{\csname#1\endcsname#2}%
#4\ifthenelse{\equal{#1}{}}{\right#3}{\csname#1\endcsname#3}}

\begin{document}

\title{On Diophantine quintuple Conjecture}
\author{{Wenquan Wu and Bo He}}


\subjclass[2000]{11D09}
\keywords{Diophantine m tuples; Pell equations}
\thanks{Supported by National Natural Science Foundation of China (Grant No. 11301363)}

\maketitle
\begin{abstract}
In this note, we prove that if $\{a,b,c,d,e\}$ with $a<b<c<d<e$ is a Diophantine quintuple, then $d<10^{76}$.
\end{abstract}

A set of $m$ distinct positive integers $\{a_1, \dots,  a_m\}$ is
called a Diophantine m-tuple  if $a_i a_j +1$ is a perfect square. Diophantus studied sets of positive rational numbers with the same property, particularly he found the set of four positive rational numbers $\left\{\frac{1}{16}, \frac{33}{16}, \frac{17}{4}, \frac{105}{16}\right\}$. But the first Diophantine quadruple was found by Fermat. In fact, Fermat proved that the set $\{1, 3, 8, 120\}$ is a Diophantine quadruple called {\it Fermat set}. Moreover, Baker and Davenport \cite{Baker-Davenport:1969} proved that the set $\{1, 3, 8, 120\}$ cannot be extended to a Diophantine quintuple.

 Several results of the generalization of the result of Baker and Davenport are obtained. In 1997, Dujella \cite{Dujella:1997-1} proved that the Diophantine triples of the form $\{k-1, k+1, 4k\}$, for $k \geq 2$, cannot be extended to a Diophantine quintuple.  The Baker-Davenport's result corresponds to $k=2$. In 1998, Dujella and Peth\"{o} \cite{Dujella-Pethoe:1998} proved that the Diophantine pair $\{1, 3\}$ cannot be extended to a Diophantine quintuple. In 2008, Fujita \cite{Fujita} obtained a more general result by proving that the Diophantine pairs $\{k - 1, k + 1\}$, for $k \geq 2$, cannot be extended to a Diophantine quintuple. A folklore conjecture is
\begin{conjecture}
There does not exist a Diophantine quintuple.
\end{conjecture}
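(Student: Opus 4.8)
\section*{Proof proposal}

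The plan is to argue by contradiction. Suppose $\{a,b,c,d,e\}$ with $a<b<c<d<e$ is a Diophantine quintuple. The absolute bound $d<10^{76}$ of the abstract already confines the problem to a finite range, but this range is hopelessly large for a direct search, so the real work is to replace that crude estimate by rigid structural identities. Writing $ab+1=r^2$, $ac+1=s^2$, $bc+1=t^2$, the only two ways to extend a triple $\{a,b,c\}$ by an element larger than $c$ are
$$ d_{\pm}=a+b+c+2abc\pm 2rst, $$
with $d_+$ the \emph{regular} extension. Every four-element subset of our quintuple that omits a single entry is again a Diophantine quadruple, so the first step is to prove that the extensions forced on us are all regular: concretely, that $d=d_+$ for the triple $\{a,b,c\}$ and that $e$ is the regular extension of the triple $\{b,c,d\}$. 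This rigidity, already available for large subfamilies through the work of Dujella, Fujita and others, is what turns a search over an interval into the study of a handful of explicit Diophantine equations.

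The second step is to convert regularity into simultaneous Pellian equations. From $ad+1=x^2$, $bd+1=y^2$, $cd+1=z^2$ one obtains the pair
$$ az^2-cx^2=a-c,\qquad bz^2-cy^2=b-c, $$
whose integral solutions split into finitely many classes, each parametrised by a binary linear recurrence; thus $z=z_m$ from the first equation and $z=z_n$ from the second, and the quintuple condition forces the common value $z_m=z_n$. Reducing such an identity modulo $c$ (and modulo $4c$) yields congruences that bound the indices from below, giving a gap principle of the shape $m,n\gg\sqrt{c}$: the indices cannot be small unless $c$ is, and this is the engine driving all lower bounds.

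The third step is the analytic upper bound. The two parametrisations of $z$ give a linear form in three logarithms,
$$ \Lambda=n\log\alpha-m\log\beta+\log\gamma, $$
with $\alpha,\beta$ the dominant roots of the two recurrences and $\gamma$ an explicit algebraic number; a direct estimate shows $\lvert\Lambda\rvert$ is exponentially small in the larger index, while Matveev's lower bound for linear forms in logarithms forces $\lvert\Lambda\rvert$ to be not too small. Comparing the two inequalities bounds the indices polynomially in $\log c$, and feeding in $c<d<10^{76}$ produces an explicit, if still enormous, ceiling on $m$ and $n$.

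The final step is to collapse this ceiling to nothing. Combining the Baker-type upper bound with the $m,n\gg\sqrt{c}$ gap principle eliminates all but finitely many triples, and the residual interval is cleared by the Baker--Davenport reduction (continued-fraction or LLL reduction of the associated inhomogeneous linear form), which repeatedly contracts the admissible range of indices down to a point. The genuinely delicate part, and the step I expect to be the main obstacle, is the low-lying range where $a$ is small: there the gap principle is too weak to start the reduction, and one must instead appeal to the hypergeometric method---Thue--Siegel estimates for the simultaneous rational approximation of the two quadratic irrationals entering the Pell system, in the spirit of Rickert and Bennett---to rule out a fifth element by hand. Patching the analytic region, the reduction region and the hypergeometric region together, with no gap left uncovered, is where the real difficulty of the conjecture lies.
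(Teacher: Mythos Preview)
The paper does not prove this statement. It is explicitly labelled a \emph{conjecture} and left open; the paper's actual contribution is Theorem~1, the bound $d<10^{76}$, obtained from a sharpened gap principle (Lemma~2 there) combined with Matveev's estimate. So there is no ``paper's own proof'' to compare your proposal against.

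Your proposal is not a proof but a research outline, and you essentially say so yourself in the last paragraph: the step you flag as ``the main obstacle''---closing the low-lying range where the gap principle is too weak and the hypergeometric method must take over, and then patching the three regimes without leaving a gap---is precisely why the conjecture was open at the time of this paper. The ingredients you list (regularity of extensions, simultaneous Pell equations, congruence gap principle, linear forms in logarithms, Baker--Davenport reduction, Rickert--Bennett style hypergeometric bounds) are all standard in the literature and are exactly the tools the paper itself invokes for its weaker Theorem~1; what is missing is any mechanism that forces the ranges to overlap. In particular, the bound $d<10^{76}$ that you invoke at the outset is far too large for Baker--Davenport reduction to terminate over the full parameter space, and nothing in your sketch explains how to bridge that gap. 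The conjecture was eventually proved (He, Togb\'e and Ziegler, 2019) along broadly these lines, but only after substantial further refinements of each step; your proposal does not supply those refinements.
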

 In 2004, Dujella \cite{Dujella:2004} proved that there are
only finitely many Diophantine quintuples. Assuming that
$\{a, b, c, d, e\}$ is a Diophantine quintuple with $a<b<c<d<e$, authors got the upper bound of element $d$:

i) $d<10^{2171}$, Dujella \cite{Dujella:2004} ;

ii) $d<10^{830}$, Fujita \cite{Fujita-number};

iii)  $d<10^{100}$, Filipin and Fujita \cite{Filipin-Fujita}.

iv) $d<3.5\cdot 10^{94}$, Elsholtz, Filipin and Fujita \cite{Elsholtz}.

Moreover, by using upper bound of $d$, corresponding upper bound of
number of Diophantine quintuples are obtained, $10^{1930}$,
$10^{276}$, $10^{96}$ and $6.8\cdot 10^{32}$respectively.

In this paper, we prove the following results.
\begin{thm}\label{thm:1}
If $\{a,b,c,d,e\}$ is a Diophantine quintuple with $a<b<c<d<e$, then $d<10^{76}$.
\end{thm}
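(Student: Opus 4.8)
The plan is to run the ``simultaneous Pell equations and linear forms in logarithms'' machinery of \cite{Dujella:2004,Fujita-number,Filipin-Fujita,Elsholtz}, taking the bound $d<3.5\cdot10^{94}$ of \cite{Elsholtz} as the point of departure and sharpening every estimate that enters it. Suppose $\{a,b,c,d,e\}$ is a Diophantine quintuple with $a<b<c<d<e$; then the ``forks'' $\{a,c,d,e\}$, $\{b,c,d,e\}$, $\{a,b,d,e\}$ are Diophantine quadruples, and writing $ab+1=r^{2},\ ac+1=s^{2},\ bc+1=t^{2},\ ad+1=x^{2},\ bd+1=y^{2},\ cd+1=z^{2}$ and $ae+1=X^{2},\ be+1=Y^{2},\ ce+1=Z^{2},\ de+1=W^{2}$, elimination of $e$ yields the Pellian equations
\[
  dZ^{2}-cW^{2}=d-c,\qquad dY^{2}-bW^{2}=d-b,\qquad dX^{2}-aW^{2}=d-a .
\]
For each of these the admissible $W$ fall into finitely many binary recurrence sequences whose dominant terms grow with ratios governed by the units $z+\sqrt{cd}$, $y+\sqrt{bd}$, $x+\sqrt{ad}$ of $\ZZ[\sqrt{cd}]$, $\ZZ[\sqrt{bd}]$, $\ZZ[\sqrt{ad}]$ (note $cd+1=z^{2}$, etc.), that is, roughly like $2\sqrt{cd}$, $2\sqrt{bd}$, $2\sqrt{ad}$. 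Since the number $W=\sqrt{de+1}$ occurs in at least two of these families, we obtain a coincidence $W=V_{m}=W_{n}$ with $m,n\ge 1$.

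Comparing the Binet expansions of $V_{m}$ and $W_{n}$ produces a three-term linear form
\[
  \Lambda=b_{1}\log\eta_{1}+b_{2}\log\eta_{2}+b_{3}\log\eta_{3},\qquad 0<|\Lambda|<C_{1}\,(2\sqrt{cd})^{-2m},
\]
in which the $\eta_{i}$ are algebraic numbers built from $\sqrt a,\sqrt b,\sqrt c,\sqrt d$ and the fundamental solutions, with logarithmic heights $\ll\log(cd)$ and $|b_{i}|\ll\max(m,n)$. Matveev's theorem on linear forms in three logarithms supplies the lower bound $|\Lambda|>\exp\!\big(-C_{2}(\log\max(m,n))\,h(\eta_{1})h(\eta_{2})h(\eta_{3})\big)$ with an explicit $C_{2}$, and the two estimates combine to $\max(m,n)<C_{3}(\log d)^{2}\log\log d$. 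Feeding in the gap principle $e>4bcd$ coming from $\{b,c,d,e\}$, together with the structural constraints on $\{a,b,c,d\}$ established in \cite{Fujita-number,Filipin-Fujita,Elsholtz} (in particular $d=a+b+c+2abc+2rst$ and the inequalities linking $c$ to $a$ and $b$), confines $m$ and $n$ to a short range; over the bulk of the parameter space this range is incompatible with a genuine coincidence $V_{m}=W_{n}$, which closes those cases, and what survives is a finite list of configurations in which $m$ and $n$ take small explicit values.

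These residual configurations are the expected ones: $a$ small, $b$ close to $a$, the triple $\{a,b,c\}$ irregular, and the indices at their minimal values. For each of them the equation $V_{m}=W_{n}$, combined with the requirement that $ae+1$, $be+1$, $ce+1$, $de+1$ be perfect squares and the identity $d=a+b+c+2abc+2rst$, reduces to an explicit Diophantine relation in the three variables $a,b,c$, which is settled either directly or by a Baker--Davenport \cite{Baker-Davenport:1969}/Dujella--Peth\"{o} \cite{Dujella-Pethoe:1998} continued-fraction reduction of the associated short linear form. Carrying sharp constants through all of this is what brings the bound down to $d<10^{76}$.

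The approach is not conceptually new, so the real difficulty is quantitative. The factor $(\log d)^{2}$ lost in the three-logarithm step must be absorbed by the essentially linear lower bounds coming from the gap principles, and this is only comfortable when $a,b,c,d$ are well spread out; shaving the bound from $\sim 10^{94}$ to $10^{76}$ therefore hinges on (i) arranging the linear form so that one logarithm can be dropped, using the sharper two-logarithm estimate of Laurent--Mignotte--Nesterenko wherever the fundamental terms of the two recurrences coincide, (ii) using the strongest available gap principles and height bounds, and (iii) an exhaustive but technically delicate treatment of the boundary configurations above. I expect (iii), together with the need to keep every constant sharp simultaneously, to be where almost all of the effort lies.
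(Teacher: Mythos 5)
There is a genuine gap: your outline never produces a lower bound for the index of the coincidence in terms of a positive power of $d$, and without that the Matveev step bounds nothing. Matveev (in the form of Proposition~14 of \cite{Fujita-number}) gives $m/\log(351 m)\ll (\log d)^{2}$; this controls $m$ in terms of $d$ but says nothing about $d$ itself. The whole content of the paper is the converse inequality, Lemma~\ref{lem:3}: working with the single quadruple $\{a,b,d,e\}$ (i.e.\ $A=a$, $B=b$, $C=d$, $D=e$, so $z=\sqrt{de+1}=v_{2m}=w_{2n}$), the congruence $Am^{2}+\lambda Sm\equiv Bn^{2}+\lambda Tn\pmod{4C}$ from Lemma~4 of \cite{Dujella:2001} (valid because Fujita's Lemmas~5 and~6 force $j\equiv k\equiv 0\pmod 2$ and $z_{0}=z_{1}=\pm1$) is converted, by a size argument, into $m>0.48\,B^{-1/2}C^{1/2}$; combined with $b<\sqrt{d}/2$ this gives $m\ge 0.678\,d^{1/4}$, and comparing $d^{1/4}$ against $(\log d)^{2}\log(238\,d^{1/4})$ is exactly what yields $d<10^{76}$. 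Your ``gap principle $e>4bcd$'' is a lower bound on the fifth element, not on the index $m$, and cannot substitute for this.

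The proposed endgame is also structurally unsound. After the linear-forms step the set of surviving quadruples $(a,b,c,d)$ is not finite --- it is all of them with $d$ below a bound that has not yet been established --- so there is no ``finite list of residual configurations,'' and a Baker--Davenport/Dujella--Peth\H{o} continued-fraction reduction requires explicit numerical input and can never produce an absolute bound on $d$ over all quintuples. Likewise the claim that over ``the bulk of the parameter space'' the coincidence $V_{m}=W_{n}$ is impossible has no mechanism behind it. The actual proof needs none of this case analysis: it only invokes \cite{Dujella-Pethoe:1998} once, to rule out $b\le 3$ so that the hypothesis $B\ge 8$ of Lemma~\ref{lem:3} holds, and then concludes by the single inequality $C^{1/4}<4.11\cdot 10^{12}\log^{2}C\cdot\log(238\,C^{1/4})$.
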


From now on, we will assume that $\{a,b,c,d,e\}$ is a Diophantine
quintuple with $a<b<c<d<e$.  Let us consider a Diophantine triple
$\{A, B, C\}$. We define the positive integers $R,S,T$ by
\begin{equation}\label{eq:def-rst}
\notag AB+1=R^2,\quad AC+1=S^2,\quad BC+1=T^2.
\end{equation}
 In order to extend the Diophantine triple $\{A,B,C\}$ to a Diophantine quadruple $\{A,B,C,D\}$, we have to solve the system
\begin{equation}\label{eq:system-d}
\notag    AD+1=x^2,\quad BD+1=y^2,\quad CD+1=z^2,
\end{equation}
in integers $x, y, z$. Eliminating $D$, we obtain the following
system of Pellian equations.
\begin{eqnarray}
\label{eq:system-zx}     Az^2-Cx^2 &=& A-C,\\
\label{eq:system-zy} Bz^2-Cy^2 &=& B-C.
\end{eqnarray}
All solutions of \eqref{eq:system-zx} and \eqref{eq:system-zy} are
respectively given by $z=v_m$ and $z=w_n$ for some integer $m,n \ge
0$, where
$$
v_0=z_0,\quad v_1=Sz_0+Cx_0,\quad v_{m+2}=2Sv_{m+1}-v_m,
$$
$$
w_0=z_1,\quad w_1=Tz_1+Cy_1,\quad w_{n+2}=2Tw_{n+1}-w_n,
$$
with some integers $z_0,z_1,x_0,y_1$.

\vspace{5mm}

 By Lemma~3 of \cite{Dujella:2004}, we
have the following relations between $m$ and $n$.
\begin{lemma}\label{lem:2}
If $v_{2m} = w_{2n}$, then $n\le m \le 2n$.
\end{lemma}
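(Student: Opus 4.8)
\emph{Proof proposal.} Both inequalities come from comparing the exponential growth rates of the two binary recurrences, once the fundamental solutions have been identified. I would first invoke the standard analysis of the Pellian equations \eqref{eq:system-zx}--\eqref{eq:system-zy} (as in \cite{Dujella:2004}): the fundamental solutions entering $(v_m)$ and $(w_n)$ are highly restricted, and after normalising signs---treating separately the finitely many exceptional classes, which behave the same way with cruder constants---one may take $z_0=z_1=1$ and $x_0=y_1=1$ with all $v_m,w_n$ positive. (A short congruence computation, reducing $v_{m+2}=2Sv_{m+1}-v_m$ modulo $C$ and using $S^2\equiv1\pmod C$, gives $v_{2m}\equiv z_0\pmod C$ and $w_{2n}\equiv z_1\pmod C$, the standard device for forcing $z_0\equiv z_1\pmod C$.) Writing $\alpha:=S+\sqrt{AC}$ (so $\alpha^{-1}=S-\sqrt{AC}$) and $\gamma:=T+\sqrt{BC}$, solving the recurrences gives
\begin{equation*}
v_{2m}=\tfrac12\bigl(1+\sqrt{C/A}\bigr)\alpha^{2m}+\tfrac12\bigl(1-\sqrt{C/A}\bigr)\alpha^{-2m},
\end{equation*}
and the analogue for $w_{2n}$ with $\sqrt{C/B}$ and $\gamma$ in place of $\sqrt{C/A}$ and $\alpha$.

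From $S^{2}=AC+1$ one gets the elementary bounds $4AC<\alpha^{2}<4AC+2$ and $4BC<\gamma^{2}<4BC+2$, hence, using $A<B<C$,
\begin{equation*}
\alpha^{2}<\gamma^{2}<\alpha^{4}
\end{equation*}
(the left inequality because $4BC\ge 4AC+4C>4AC+2$, the right because $\gamma^{2}<4C^{2}+2<16A^{2}C^{2}<\alpha^{4}$). Taking logarithms in $v_{2m}=w_{2n}$, the exponentially small $\alpha^{-2m}$-- and $\gamma^{-2n}$--tails and the logarithms of the leading coefficients (which equal $\tfrac12\log(C/A)+O(1)$ and $\tfrac12\log(C/B)+O(1)$) collect into one absolutely bounded error term, leaving
\begin{equation*}
2m\log\alpha=2n\log\gamma-\tfrac12\log(B/A)+E,\qquad |E|\le E_0,
\end{equation*}
with $E_0$ an explicit absolute constant. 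Since $\log\alpha<\log\gamma$ and $\log\alpha^{2}>\log(4AC)>\log(B/A)$, this yields $m>n-\tfrac12-E_0/\log(4AC)$, so $m\ge n$ once $\log(4AC)>2E_0$; and since $\log\alpha^{2}>\log(4AC)$ while $\log\gamma^{2}<\log(8BC)<2\log(4AC)$ (from $B<C$, $A\ge1$), it yields $m<2n+E_0/\log(4AC)$, so $m\le 2n$ once $\log(4AC)>E_0$.

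This settles the lemma for every triple with $\log(4AC)$ above an absolute threshold; the remaining triples have $A$ and $C$---hence also $B$---bounded, so there are only finitely many, and each is checked by a direct computation with the recurrences (the bounded data $z_0,x_0,z_1,y_1$ and the congruence $z_0\equiv z_1\pmod C$ keep the verification finite). The one genuinely delicate point is making $E_0$ and the threshold explicit: a careful but routine estimate of the ratio of leading coefficients of the two sequences and of the size of the exponential tails. Everything else is bookkeeping with binary recurrences.
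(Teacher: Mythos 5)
First, be aware that the paper contains no proof of this statement: it is imported verbatim as Lemma~3 of \cite{Dujella:2004}, so the only thing to compare against is Dujella's original argument. Your overall strategy --- solve the recurrences explicitly, compare growth rates via $4AC<\alpha^2<4AC+2$ and $4BC<\gamma^2<4BC+2$, and deduce $\alpha^2<\gamma^2<\alpha^4$ --- is the right one and is essentially the route taken in \cite{Dujella:2004}; the closed form for $v_{2m}$ and the three displayed inequalities are all correct.

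The genuine gap is the normalisation $z_0=z_1=x_0=y_1=1$. The congruence device you describe gives $v_{2m}\equiv z_0$ and $w_{2n}\equiv z_1$ modulo $2C$, hence (with the standard size bounds on fundamental solutions) $z_0=z_1$; it does not make either of them equal to $\pm1$. For a general triple $\{A,B,C\}$ the fundamental solutions of \eqref{eq:system-zx} are only known to satisfy bounds that grow like a positive power of $C$, and the corresponding solution classes are certainly not ``finitely many'' across all triples, so they cannot be shunted into an exceptional list. This is not cosmetic: with $z_0,x_0$ of size a power of $C$, the leading coefficient $\frac12(z_0+x_0\sqrt{C/A})$ varies over a range whose logarithm is comparable to $\log C$, hence comparable to $\log\alpha$ itself, so your error term $E$ is \emph{not} absolutely bounded and the argument as written only yields $n-O(1)\le m\le 2n+O(1)$. (The fact that one may take $z_0=z_1=\pm1$ in the present paper is a substantive result of Fujita --- Lemmas~5 and~6 of \cite{Fujita-number} --- invoked later in the proof of Theorem~\ref{thm:1} and specific to the choice $C=d$; it cannot be presupposed when proving Lemma~\ref{lem:2}, which must hold for general fundamental solutions.) The clean inequalities $n\le m\le 2n$ therefore have to come from explicit two-sided estimates for $v_m$ and $w_n$ in terms of the bounded-but-not-unit fundamental data, as in \cite{Dujella:2004}. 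Finally, your closing step --- ``the remaining triples are finitely many and each is checked by a direct computation'' --- is not a finite verification as described: even for a single fixed triple one must exclude infinitely many pairs $(m,n)$, which again requires the quantitative growth comparison rather than a computation.
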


We will give a new lower bound of $m$.
this paper.
\begin{lemma}\label{lem:3}
If $B\ge 8$ and $v_{2m} = w_{2n}$ has solutions $m\ge3, n\ge 2$, then
$m>0.48B^{-1/2}C^{1/2}$.
\end{lemma}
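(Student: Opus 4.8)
\emph{Proof proposal.} The plan is to distill from $v_{2m}=w_{2n}$ a congruence modulo a high power of $C$ and then play it against the sizes of the two sides. First I would record the closed forms
$$v_m=\tfrac12\Bigl[(z_0+x_0\sqrt{C/A})\,\alpha^{m}+(z_0-x_0\sqrt{C/A})\,\alpha^{-m}\Bigr],\qquad\alpha=S+\sqrt{AC},$$
and the analogue for $w_n$ with $\beta=T+\sqrt{BC}$. Writing $\alpha^{2m}=(2AC+1+2S\sqrt{AC})^{m}$, expanding by the binomial theorem and simplifying with $S^2=AC+1$ (equivalently, inducting on the recurrences after checking $m=0,1$) gives
$$v_{2m}\equiv z_0+2C\bigl(Am^2z_0+Smx_0\bigr)\pmod{4C^2},\qquad w_{2n}\equiv z_1+2C\bigl(Bn^2z_1+Tny_1\bigr)\pmod{4C^2}.$$

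Next, from $v_{2m}=w_{2n}$ I would reduce modulo $C$ to get $z_0\equiv z_1\pmod C$, and then use the bounds on the minimal solutions of \eqref{eq:system-zx} and \eqref{eq:system-zy} from \cite{Dujella:2004} (this is where $B\ge8$ enters) to reduce to the principal case $z_0=z_1=1$, hence $x_0=y_1=1$. Cancelling $z_0$ and the factor $2C$ leaves
$$Am^2+Sm\equiv Bn^2+Tn\pmod{2C}.$$
Both sides are positive integers, so either they are equal or one of them exceeds $2C$. Equality is impossible: by Lemma~\ref{lem:2} write $m=\lambda n$ with $1\le\lambda\le2$, so $(A\lambda^2-B)\,n=T-S\lambda$; squaring and using $T^2=BC+1$, $S^2=AC+1$, $A<B$ one finds the two sides have opposite signs unless $\lambda=1$ and $A=B$, which is absurd. (Here one assumes $C>3n^2$; in the complementary case $C\le3n^2$ one already has $n\ge\sqrt{C/B}$ since $B\ge3$, and $m\ge n$ finishes.)

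Hence one of $Am^2+Sm$, $Bn^2+Tn$ is $\ge 2C$. Writing $Tn=\sqrt{Bn^2\cdot C+n^2}$, the inequality $Bn^2+Tn\ge 2C$ forces, after solving the quadratic, $Bn^2\ge C-n^2/(3C)$, hence $n^2\ge C/(B+1)>(0.48)^2\,C/B$, so $m\ge n>0.48\,B^{-1/2}C^{1/2}$; likewise $Am^2+Sm\ge 2C$ forces $Am^2\ge C-m^2/(3C)$, so $m^2\ge C/(A+1/(3C))>C/(B+1)>(0.48)^2\,C/B$ using $A<B$. In every case $m>0.48\,B^{-1/2}C^{1/2}$.

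The main obstacle is the reduction of the minimal solutions. Besides $z_0=z_1=x_0=y_1=1$ one must also dispose of the various sign choices for $(z_0,x_0)$ and $(z_1,y_1)$, of the possibility $z_1\ne z_0$ (only $z_1\equiv z_0\pmod C$ comes for free), and, most seriously, of non-principal solutions with $|z_0|\ge2$, where $x_0,y_1$ may be of size $\sim\sqrt A,\sqrt B$, so that $Am^2z_0+Smx_0$ need no longer be $<2C$ for $m$ as small as $B^{-1/2}C^{1/2}$; these have to be excluded separately (e.g.\ by bounding $v_{2m}$ directly, or by ruling them out once $C$ is large). It is the accumulated slack in those cases --- not the principal case, which as above yields the far stronger $m>(1-o(1))B^{-1/2}C^{1/2}$ --- that accounts for the modest constant $0.48$.
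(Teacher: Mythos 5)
Your opening congruences modulo $4C^2$ are exactly Dujella's Lemma~4 in \cite{Dujella:2001}, which is also the paper's starting point, and your treatment of the case $z_0=z_1=+1$ is essentially sound (it even gives a bound stronger than the lemma claims). The genuine gap is the case $z_0=z_1=-1$. What the cited reduction of fundamental solutions actually delivers is only $z_0=z_1=\pm1$; the minus sign cannot be ``excluded separately'' as you propose --- it is a live, unavoidable case, and it is precisely the case that produces the constant $0.48$ and requires the hypothesis $B\ge8$. You have the difficulty inverted: in the paper the case $\lambda=+1$ is dispatched in one line (it forces $m^2>C$, an immediate contradiction), and all of the work, and all of the loss in the constant, occurs at $\lambda=-1$.

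Concretely, with $z_0=z_1=-1$ the congruence becomes $Am^2-Sm\equiv Bn^2-Tn\pmod{4C}$; both sides are now differences of comparable quantities of absolute value well below $C$, so the size dichotomy again forces the equality $Am^2-Bn^2=Sm-Tn$, but your sign argument collapses here: writing $m=\lambda n$, the equation reads $(A\lambda^2-B)n=S\lambda-T$, and $S\lambda-T>0$ exactly when $C(A\lambda^2-B)>1-\lambda^2$, so when $A\lambda^2>B$ both sides are positive --- same sign, no contradiction. The paper instead multiplies the equality by $Tn+Sm$ and uses $S^2=AC+1$, $T^2=BC+1$ to reach the product identity $m^2-n^2=(C-(Tn+Sm))(Bn^2-Am^2)$; since $Bn^2\ne Am^2$ this gives $C\le Tn+Sm+m^2-n^2\le 2Tm+0.75m^2$, and it is this inequality, with $2Tm\approx 2\sqrt{BC}\,m$ as the dominant term, that caps the conclusion at $m>0.48\,B^{-1/2}C^{1/2}$ and needs $B\ge8$ to close. (Your remark that $B\ge8$ ``enters'' in the reduction of the fundamental solutions is also misplaced: it enters only in this final numerical estimate.) Without an argument for the minus case your proof does not establish the lemma.
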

\begin{proof}
By Lemma 4 in \cite{Dujella:2001} and $z_0=z_1=\lambda\in\{1,-1\}$,
we have
$$
Am^2 + \lambda Sm \equiv Bn^2 + \lambda Tn \pmod{4C}.
$$
Suppose that $m\le 0.48 B^{-1/2}C^{1/2}$. From the relation $n\le m$, we get
$$
\max\{Am^2,Bn^2\}\le Bm^2  \le 0.25 B\cdot B^{-1}C< 0.25C
$$
and
$$
\max\{Sm, Tn\} \le Tm <0.48
(BC+1)^{1/2}B^{-1/2}C^{1/2}<0.5(BC)^{1/2}B^{- 1/2}C^{1/2}=0.5C.
$$
We obtain that
$$
Am^2-Bn^2 = \lambda (Tn-Sm).
$$
This implies
$$
\lambda (Tn+Sm)(Am^2-Bn^2) = T^2n^2 - S^2m^2
$$
$$
= (BC+1)n^2 - (AC+1)m^2
= C(Bn^2 - Am^2) + n^2 -m^2.
$$
It follows that
$$
m^2 - n^2 = (C+\lambda(Tn+Sm))(Bn^2 - Am^2).
$$
If $Bn^2 - Am^2 =0$, then $m=n$, it is impossible. Hence,
$$
m^2 - n^2 = |m^2-n^2| \ge |C+\lambda(Tn+Sm)|.
$$
The case $\lambda=1$ provides $m^2 > C$, it is a contradiction to
$m<0.48B^{-1/2}C^{1/2}$. From $Tn+Sm<2Tn<C$, we need to consider
$$
m^2 - n^2 = |m^2-n^2| \ge |C-(Tn+Sm)|=C-(Tn+Sm).
$$
Therefore, we get the inequality
$$C\le Tn+Sm+m^2-n^2\le2Tm + 0.75m^2$$
$$
<0.96(BC+1)^{1/2}B^{-1/2}C^{1/2} + 0.173 B^{-1}C<C
$$
when $B\ge 8$. We have a contradiction. This completes the proof.
\end{proof}

\vspace{2mm}

\begin{proof}[\textbf{Proof of Theorem~\ref{thm:1}}]
Assume that $\{a,b,c,d,e\}$ is a Diophantine quintuple with $a<b<c<d<e$.
In \cite{Dujella-Pethoe:1998}, Dujella and Peth\"{o} have shown that the pair $\{1,3\}$ cannot extend to a Diophantine quintuple. This help us to assume that $b\ge 8$.

We choose that
$$
A=a,\,\, B=b,\,\, C=d,\,\, D=e
$$
in the Diophantine
quintuple $\{a,b,c,d,e\}$.
This implies the system of Pellian equations
\eqref{eq:system-zx} and \eqref{eq:system-zy} has positive integer solution $(x,y,z)$ with $|z|>1$. Equivalently, there are positive integers $j$ and $k$ satisfying $v_j=w_k$. By Lemma~5 and Lemma~6 of \cite{Fujita-number}, we have $j\equiv k\equiv 0 \pmod 2$, $k\ge4$ $z_0=z_1=\pm 1$. We set $j=2m$ and $k=2n$. Using Lemma~\ref{lem:3}, we have $m\ge 0.48B^{-1/2}C^{1/2}$.

It is known that $d\ge d^{+}> 4abc>4b^2$, where
$ d^{+} = a + b + c + 2abc + 2rst$. It results $B=b<d^{1/2}/2=C^{1/2}/2$. Hence, we have
\begin{equation}\label{eq:lowerbd-m}
  m\ge 0.678C^{1/4}.
\end{equation}

On the other hand, by used Theorem 2.1 in \cite{Matveev:2000} of Matveev ,
we have the relative upper bound (cf. Proposition 14 of
\cite{Fujita-number})
\begin{equation}\label{eq:ubd-m}
\frac{m}{\log(351m)}<2.786\cdot 10^{12}\cdot \log^2 C.
\end{equation}
Combining \eqref{eq:lowerbd-m} and \eqref{eq:ubd-m}, we obtain
$$
C^{1/4}<4.11\cdot 10^{12} \cdot \log^2 C \cdot \log(238C^{1/4}).
$$
Therefore, we have $d=C<10^{76}$. This complete Theorem~\ref{thm:1}.
\end{proof}

$\;$

Wenquan Wu

Department of Mathematics

Aba Teacher's College

Wenchuan, Sichuan 623000

P. R. China

wwq681118@163.com

$\;$

 Bo He

Department of Mathematics

Aba Teacher's College

Wenchuan, Sichuan 623000

P. R. China

bhe@live.cn

\end{document}